%reductions.tex
%
%Purpose: to document work done on reductions of DOWs
%Author: Ryan Arredondo
%---------------------------------------------------------------------------------

\documentclass[10pt]{article}

\usepackage{graphicx}
\usepackage{epstopdf}
\usepackage{float}	
\usepackage{amsthm}
\usepackage{amssymb}
\usepackage{amsmath}
\usepackage{hyperref}
\usepackage[all]{xy}
\usepackage[total={4.5in,7.2in}]{geometry}
\usepackage{eucal}
\usepackage{enumerate}
\usepackage{paralist}
\usepackage{caption}
\usepackage[labelformat=simple]{subcaption}

%my commands: define, example, remark, and skipline

\newcommand{\NI}{\operatorname{NI}(w)}
\newcommand{\NIin}[1]{\operatorname{NI}(#1)}
\newcommand{\R}{\mathcal{R}}

\newcommand{\C}{\mathcal{C}}
\newcommand{\M}{\mathcal{M}}

%for subcaption reference

%counters linked, counted by section
\newtheorem{thm}{Theorem}
\newtheorem{lem}{Lemma}
\newtheorem{crl}{Corollary}

\theoremstyle{remark}
\newtheorem{rem}{Remark}

\theoremstyle{definition}
\newtheorem{dfn}{Definition}
\newtheorem{exm}{Example}

\hyphenation{well-defined}

\begin{document}

\title{Reductions on Double Occurrence Words}
\author{Ryan Arredondo\footnote{Email: \href{mailto: rarredon@mail.usf.edu}{rarredon@mail.usf.edu}} \\
				Department of Mathematics and Statistics \\
				University of South Florida, Tampa FL 33620, USA
				}
\date{}
\maketitle

\begin{abstract}
In the present paper we consider biologically motivated reduction operations on double occurrence words.
Then we define the nesting index of a double occurrence word to be 
the least number of reduction operations it takes for a word to be reduced to the empty word.
We use chord diagrams and circle graphs as tools to study the nesting index of double occurrence words.
\end{abstract}

{\small \textbf{Keywords:} 
Ciliate biology, Gauss codes, double occurrence word, chord diagram, circle graph
%%%%%%%%%%%%%%%%%%%%%%Section 0%%%%%%%%%%%%%%%%%%%%%%%%%
\section{Introduction}
Certain 4-valent rigid vertex graphs, called assembly graphs, have been 
used to model the genome rearrangement processes that occur in species of single-celled organisms called ciliates, for example, in \cite{angeleska2007}.
A particular class of assembly graphs can be represented by double occurrence words, also known as Gauss codes.

In the following sections of the paper we define double occurrence words of a certain form 
which relate to patterns observed \cite{prescott2000} in the scrambled genomes of the ciliate species, \emph{Oxytricha}.
We use the double occurrence words of a specific form to define reduction operations for double occurrence words in general.
In turn we define the nesting index of a double occurrence word to be 
the least number of reduction operations it takes for a word to be reduced to the empty word.
We briefly discuss the computation of the nesting index 
and we provide a table with the counts of all double occurrence words with  nesting index from $1$ to $10$ and size from $1$ to $9$.
We use the table to propose a conjecture on the minimum number of letters needed  
to construct a double occurrence word with nesting index $n \in \mathbb{N}$.

We continue our study of the nesting index with the notions of chord diagrams and circle graphs
which can be useful tools, for example in \cite{chord}, when working with double occurrence words.
In particular, we give several results that relate the nesting index of a double occurrence word to its chord diagram.
We go on to present examples of words which have isomorphic circle graphs but arbitrarily large differences in nesting indices.
We conclude the paper with some open questions involving the nesting index and circle graphs.

%%%%%%%%%%%%%%%%%%%%%% Section 1 %%%%%%%%%%%%%%%%%%%%%%%%%
\section{Preliminaries}

%Defines graphs and assembly graphs
A \emph{graph} $G = (V,E)$ is a pair consisting of a set of vertices $V$ and a set of edges $E$
where the two endpoints of an edge in $E$ are vertices in $V$.
%where the edges in $E$ are unordered pairs of vertices in $V$.
We allow for multiple edges to be associated with a single pair of vertices;
this is sometimes referred to as a multigraph.
If $e$ is an edge and $v$ is an endpoint of $e$, then $e$ is said to be \emph{incident} to $v$.
The number of edges incident to a vertex $v$ is called the \emph{degree} of $v$.
By convention, a \emph{loop}, defined as an edge with one endpoint, contributes 2 to the degree of a vertex. 
A vertex is called \emph{rigid} if all of its incident edges are fixed in a cyclic order. 
An \emph{assembly graph} is a finite graph in which all vertices are rigid and have degree 1 or 4.
Figure~\ref{fig:assemblies} shows some examples of assembly graphs. 
A vertex with degree 1 is called an \emph{endpoint}.
In Figure~\ref{fig:assemblygraph}, $v_0$ and $v_3$ are endpoints.
In the remainder of this paper we assume that an assembly graph has endpoints, unless otherwise stated.
If $v$ is a rigid 4-valent vertex with incident edges in the cyclic arrangement $(e_1,e_2, e_3, e_4)$,
then $e_2$ and $e_4$ are called \emph{neighbors} of $e_1$ and $e_3$.
In the case that $e_2$ is a loop and $e_2 = e_3$, then we have that $e_2$ is both a neighbor and not a neighbor of $e_1$.
In Figure~\ref{fig:assemblygraph} $e_1$ has neighbors $e_3$ and $e_4$
and in Figure~\ref{fig:nonsimpleassembly} $e_3$ has neighbors $e_4$ and $e_5$.

%Figures of a basic assembly graphs
\begin{figure}
	\centering
	\begin{subfigure}[h]{0.41\textwidth}
		\centering
			\includegraphics[scale=0.43]{./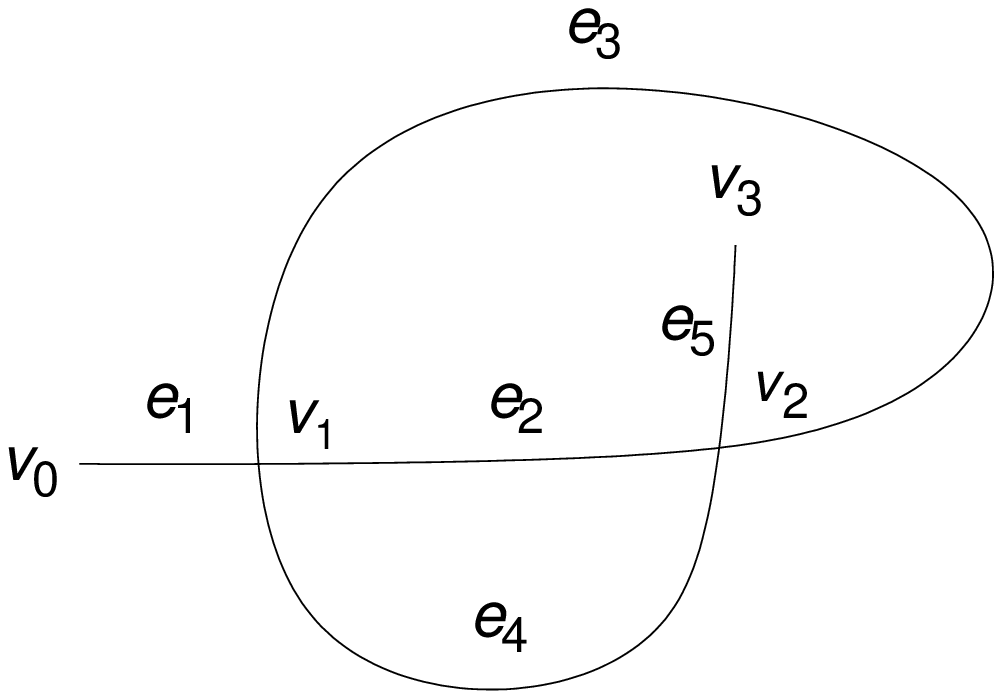}
		\caption{Simple assembly graph}
 		\label{fig:assemblygraph}
	\end{subfigure}
	\qquad
	\begin{subfigure}[h]{0.45\textwidth}
		\centering
		\hspace*{-16pt} \vspace*{4pt}
		\includegraphics[scale=0.65]{./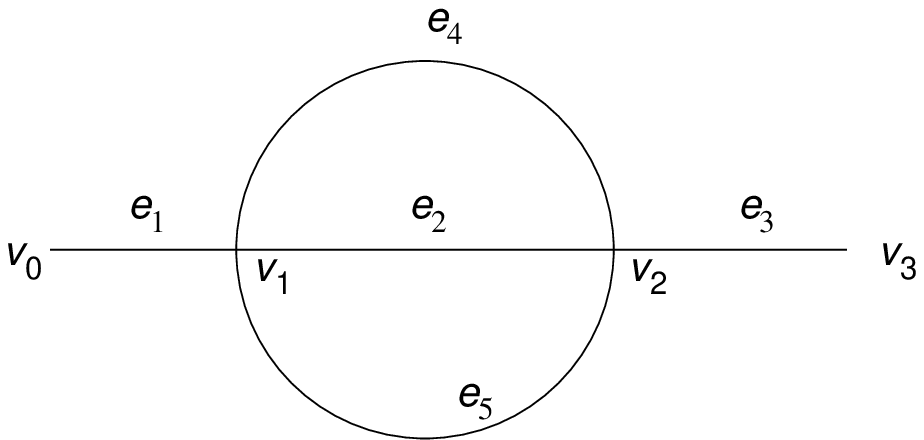}
		%\vspace*{1pt}
		\caption{Non-simple assembly graph}
		\label{fig:nonsimpleassembly}
	\end{subfigure}
	\caption{Examples of assembly graphs}
	\label{fig:assemblies}
\end{figure}

%Brief biological background
Assembly graphs are of particular interest because of their recent use 
to model genome rearrangement in ciliates.
Ciliates are unicellular organisms which contain two types of nuclei,
the germline (micronuclear) and somatic (macronuclear). 
The micronucleus contains segments of DNA found in the macronucleus but often in a permuted order
and separated by non-coding DNA.
During sexual reproduction the micronuclear genome undergoes massive elimination of non-coding DNA
and rearrangement to obtain a new macronucleus.
We refer the reader to \cite{prescott2000} for a more thorough treatment of the biological background
and we recommend \cite{angeleska2007} for more on the assembly graph model.

%Simple assembly graphs
For an assembly graph $\Gamma$ with endpoints $v_0$ and $v_n$, 
a \emph{transverse path} is a sequence
$\gamma = (v_0, e_1, v_1, e_2, \ldots, e_n, v_n)$ satisfying:
%\begin{enumerate}\itemsep1pt \parskip0pt \parsep0pt
\begin{inparaenum}[(1)]
	\item $(v_0, \ldots, v_n)$ is a sequence of a subset of vertices of $\Gamma$ with possible repetition of the same vertex at most twice,
	\item $\{e_1, \ldots, e_n\}$ is a set of distinct edges such that $e_i$ is incident to $v_{i-1}$ and $v_{i}$ for $i = 2, \ldots, n$, and
	\item $e_i$ is not a neighbor of $e_{i-1}$ with respect to the rigid vertex $v_{i-1}$, for $i = 2, \ldots, n$.
\end{inparaenum}
%\end{enumerate}
An assembly graph $\Gamma$ is called \emph{simple} if there is a transverse \emph{Eulerian} path in $\Gamma$,
 meaning there is a transverse path that contains every edge from $\Gamma$ exactly once.
 The assembly graph in Figure~\ref{fig:assemblygraph} is simple with endpoints $v_0$ and $v_3$, while
 the assembly graph in Figure~\ref{fig:nonsimpleassembly} is non-simple with two transverse components; 
 one without endpoints and the other with endpoints $v_0$ and $v_3$.
In the remainder of this paper all assembly graphs are assumed to be simple, unless otherwise stated.

%Representation of simple assembly graphs through double occurrence words
We now establish a convention for representing simple assembly graphs by words.
A \emph{double occurrence word} $w$ is a word containing symbols (or \emph{letters}) from a finite alphabet such that every symbol in $w$ appears exactly twice.
Let $\Gamma$ be a simple assembly graph with vertices $v_1,\ldots v_n$.
Given a transverse Eulerian path of $\Gamma$, $\gamma = (v_{i_0}, e_1, v_{i_1}, \ldots, e_{2n+1}, v_{i_{2n+1}})$ for $i_k \in \{1,\ldots,n\}$,
note that all vertices except endpoints, $v_{i_0}$ and $v_{i_{2n+1}}$, are visited exactly twice.
Thus, we can represent $\Gamma$ by the double occurrence word $v_{i_1} v_{i_2} \cdots v_{i_{2n}}$.
For example, the graph in Figure~\ref{fig:assemblygraph} has transverse Eulerian path $(v_0, e_1, v_1, e_2, v_2, e_3, v_1, e_4, v_2, e_5, v_3)$
and so we can represent the graph by the double occurrence word $v_1 v_2 v_1 v_2$.

%double occurrence words in ascending order
It will sometimes be convenient to label double occurrence words in a conventional manner.
Let $w_1$ be a word over the alphabet $\Sigma_1$ and $w_2$ a word over the alphabet $\Sigma_2$ such that $|w_1| = n = |w_2|$.
Then we say $w_2 = b_1b_2 \cdots b_n$ is a \emph{relabeling} of $w_1 = a_1a_2 \cdots a_n$
when $a_i = a_j$ if and only if $b_i = b_j$ for all $1 \leq i \leq j \leq n$.
A word $w$ over a finite alphabet $\Sigma = \{1,2, \ldots, n\}$ is in \emph{ascending order} 
if the left-most symbol is $1$ and every other symbol in $w$ is at most 1 value greater than any symbol appearing to the left of it.
The assembly graph in Figure~\ref{fig:assemblygraph} can be represented by the double occurrence word $v_1v_2v_1v_2$ 
or in ascending order by $1212$.
We use $w^{asc}$ to denote the unique relabeling of a word $w$ such that $w^{asc}$ is in ascending order.

%A few more basic definitions for double occurrence words
A double occurrence word $w$ with $n$ distinct symbols has \emph{size} $n$  and  \emph{length} $|w| = 2n$. 
We use $\epsilon$ to denote the \emph{empty word}, a word containing no symbols. 
We say $u$ is a \emph{subword} of a word $w$, written $u \sqsubseteq w$, if we can write $w = suv$, 
where $s$, $u$, and $v$ are also words (possibly empty). 
Two words $w_1$ and $w_2$ are said to be \emph{disjoint} if they have no letter in common.
If $w = a_1a_2 \cdots a_n$, then $w^R = a_n \cdots a_2 a_1$ is called the \emph{reverse} of $w$.
Two words $w_1$ and $w_2$ over an alphabet $\Sigma$ are said to be \emph{reverse equivalent} if $w_1 = w_2$ or $w_1 = w_2^R$.

%%%%%%%%%%%%%%%%%%%%%%%%%REDUCTIONS%%%%%%%%%%%%%%%%%%%%%%%%%%%%%%%
\section{Reductions on double occurrence words}

%A brief intro to the section
In the present section we introduce notation which will be useful for defining reduction operations on double occurrence words.
We then introduce the notions of a repeat word and a return word.
We use these words to define the reduction operations on double occurrence words. 
The reduction operations will be used to define the nesting index of a double occurrence word.

%Introduces notations for removal of subwords and letters of double occurrence words
\subsection{Reduction notation}

	%subword removal
	\begin{dfn}
		If $w = w_1vw_2$ where $w$ and $v$ are both double occurrence words, 
		then $w - v = w_1w_2$ is called the \emph{subword removal} of $v$ from $w$.
	\end{dfn}
	
	%set of subwords removal
	\begin{dfn}
		If $D = \{v_1, v_2, \ldots, v_n\}$ is a set containing disjoint double occurrence subwords of $w$,
		and $\mathcal{O} = (v_1,\ldots,v_n)$ is an ordering of $D$, then
		we use $w - D_{\mathcal{O}}$ to mean $((\cdots((w-v_1) - v_2)\cdots) - v_n)$.
		\label{def:set-removal}
	\end{dfn}
	
	\begin{rem}
		If $D$ is a set of disjoint double occurrence subwords of $w$
		and $\mathcal{O}$ and $\mathcal{O'}$ are two orderings of $D$,
		then $w-D_{\mathcal{O}} = w-D_{\mathcal{O'}}$ and hence, we will write $w - D$.
	\end{rem}
			
	%removal of a letter
	\begin{dfn}
		If $w = w_1 a w_2 a w_3$ is a double occurrence word and $a \in \Sigma$,
		then $w - a = w_1w_2w_3$ is called the \emph{letter removal} of $a$ from $w$.
	\end{dfn}
	
	%example to illustrate use of notations
	\begin{exm}
		Let $w = 1123234554$. Then
		\begin{enumerate} \itemsep1pt
			\item $w - 4554 = 112323$,
			\item $w - \{11, 4554\} = ((w - 4554) - 11) = 2323$, and
			\item $w - 3 = 11224554$.
		\end{enumerate}
	\end{exm}

%gives brief intro for motivation and defines a specified reduction and also the nesting index of double occurrence words
\subsection{Reductions motivated by biology}

%Brief mention of motivation
Several sources (\cite{hoffman1997}, \cite{prescott2000}, and \cite{chang2005}, for example)  
have observed frequently occurring sequences in the scrambled micronuclear genome of certain ciliate species.
 The sources propose theories that relate the nesting of these sequences in micronuclear DNA to the evolutionary complexity of the species.
 Potentially, the more nested the sequences are, the more mutated, or evolved, the ciliate species may be.
 In the present section we introduce double occurrence words of a specific form to match the observed sequences
and we use these words to introduce the notion of a nesting index of a double occurrence word.
From a biological perspective the nesting index could be seen as a measurement of the evolutionary complexity of a scrambled ciliate genome.
%return words
\begin{dfn}
A \emph{return word} is a word of the form
\[
				a_1a_2\cdots a_n a_n \cdots a_2 a_1,  \qquad a_i \in \Sigma \text{ for all } i, \text{ and } a_i \neq a_j \text{ for } i \neq j.
\]
A \emph{repeat word} is a word of the form
\[
				a_1 a_2 \cdots a_n a_1 a_2 \cdots a_n, \qquad a_i \in \Sigma \text{ for all } i, \text{ and } a_i \neq a_j \text{ for } i \neq j.
\]
\end{dfn}

%example of a return word and a repeat word
\begin{figure}[h]
	\centering
	\begin{subfigure}{0.45\textwidth}
		\vspace*{2pt}
		\includegraphics[scale = 0.75]{./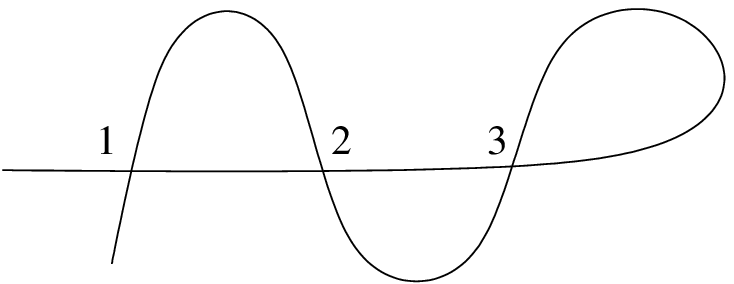}
		\label{fig:TypeA}
		\caption{123321 is a return word}
	\end{subfigure}
	\quad
	\begin{subfigure}{0.45\textwidth}
		\includegraphics[scale = 0.61]{./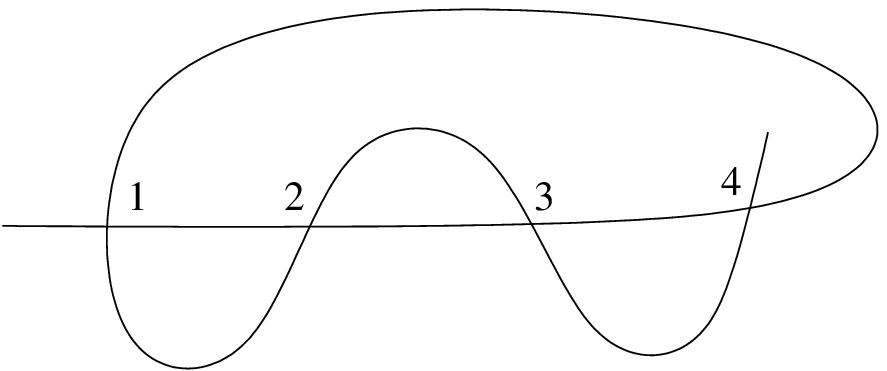}
		\label{fig:TypeB}
		\caption{12341234 is a repeat word}
	\end{subfigure}
	\caption{Assembly graphs of a repeat word and a return word}
\end{figure}

\begin{rem} \label{rem:DOseqs}
All repeat words and return words are double occurrence words.
\end{rem}

%%Defines maximal subword w.r.t. \R
\begin{dfn} \label{def:maximal}
Let $\R$ denote the set of all repeat words and return words and let $w$ be a double occurrence word.
Then a word $u$ said to be a \emph{maximal subword} of $w$  with respect to $\R$ if $u \sqsubseteq w$, $u \in \R$, 
and $u \sqsubseteq v \sqsubseteq w$ implies $v \notin \R$ or $u = v$.
\end{dfn}

When we wish to distinguish between repeat words and return words
we sometimes say a maximal return word of $w$ to mean a return word that is a maximal subword of $w$ with respect to $\R$
and similarly for a maximal repeat word of $w$.
Note that the word $aa$ for some $a \in \Sigma$ may be a maximal subword with respect to $\R$ which is both a repeat word and a return word.
In the remainder of the paper, a maximal subword of a word $w$ will mean a maximal subword with respect to $\R$.

%Examples for maximal subword
\begin{exm}
	Let $w = 1233214545$.
	Then $123321$, $2332$, $33$, and $4545$, are all subwords of $w$ which are repeat or return words.
	$2332$ and $33$ are not maximal subwords because they are subwords of the return word $123321$.
	On the other hand, $123321$ and $4545$ are maximal  subwords of $w$.
\end{exm}

%Remark that prefix and suffix of repeat and return words are also DOWs
\begin{rem} \label{rem:preandpostnotDO}
 If $s$ is a repeat word or a return word and we write $s = uv$  where $u$ and $v$ are both non-empty, 
then neither $u$ nor $v$ is a double occurrence word.
\end{rem}

%Explains purpose of preceding thm and crl
Note that if $S$ is a set of double occurrence subwords of $w$,
and the words in $S$ are not pairwise disjoint, 
then $w - S$ may not be defined as it is for disjoint subwords in Definition~\ref{def:set-removal}.
The following lemma and corollary show that
if $\M_w$ is the set of maximal subwords of a double occurrence word $w$,
then $\M_w$ is a set of disjoint subwords of $w$, hence,
$w - \M_w$ is defined.

%Lemma for disjointedness of repeat and return words
\begin{lem}
Let $w$ be a double occurrence word with subwords $s_1$ and $s_2$, such that $s_1 \in \R$ and $s_2 \in \R$.
If $s_1 \not\sqsubseteq s_2$ and $s_2 \not\sqsubseteq s_1$, then $s_1$ and $s_2$ are disjoint words.
\end{lem}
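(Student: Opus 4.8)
The plan is to argue by contradiction: assuming $s_1$ and $s_2$ share a common letter, I will produce a nontrivial factorization of $s_1$ into two double occurrence words, which is forbidden by Remark~\ref{rem:preandpostnotDO}. Throughout I identify each subword with the interval of positions it occupies in $w$, writing $s_1$ for positions $[i_1,j_1]$ and $s_2$ for positions $[i_2,j_2]$.

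First I would record the single fact that drives everything: by Remark~\ref{rem:DOseqs} both $s_1$ and $s_2$ are themselves double occurrence words. Consequently, if a letter $a$ occurs inside $s_1$, then \emph{both} of its occurrences in $w$ lie in $[i_1,j_1]$, and likewise for $s_2$. So if $s_1$ and $s_2$ are not disjoint, a common letter $a$ has both of its two occurrences in $[i_1,j_1]\cap[i_2,j_2]$; since these are two distinct positions, the intersection is nonempty and the two intervals must overlap.

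Next I would use the hypothesis that neither word is a factor of the other to pin down the overlap pattern. If the intervals shared a left endpoint ($i_1=i_2$) or a right endpoint, then the shorter word would be a prefix or a suffix of the longer, hence $s_1\sqsubseteq s_2$ or $s_2\sqsubseteq s_1$, contrary to assumption; the same conclusion follows if one interval were nested in the other. Ruling these out leaves exactly the crossing configuration, which after relabeling I may take to be $i_1<i_2\le j_1<j_2$. I then set $L=[i_1,i_2-1]$, $M=[i_2,j_1]$, and $R=[j_1+1,j_2]$, so that $s_1$ is the concatenation of the blocks $L$ and $M$ while $s_2$ is the concatenation of $M$ and $R$, with $L$ and $M$ both nonempty.

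The heart of the argument is to show that the left block $L$ is already a double occurrence word. Because $[i_1,j_1]\cap[i_2,j_2]=M$, any letter appearing in $M$ lies in both $s_1$ and $s_2$, so by the localization fact both of its occurrences are confined to $M$; thus $M$ is a double occurrence word. A letter occurring in $L$ then cannot reach into $M$ (a letter of $M$ keeps both occurrences in $M$), and since it lies in $s_1$ its two occurrences stay within $L\cup M$; hence both occurrences fall in $L$, proving $L$ is a double occurrence word. But then $s_1$ is the concatenation of the nonempty double occurrence word on $L$ with the nonempty word on $M$, contradicting Remark~\ref{rem:preandpostnotDO}. This contradiction forces $s_1$ and $s_2$ to be disjoint. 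I expect the only delicate point to be the case analysis establishing the crossing configuration together with the interval identity $[i_1,j_1]\cap[i_2,j_2]=M$; once these are in place, the appeal to Remark~\ref{rem:preandpostnotDO} finishes the proof immediately.
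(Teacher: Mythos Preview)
Your proof is correct, and its overall shape---reduce to the crossing overlap $s_1=LM$, $s_2=MR$ and derive a contradiction from Remark~\ref{rem:preandpostnotDO}---matches the paper's. The interesting difference is that you and the paper invoke Remark~\ref{rem:preandpostnotDO} in opposite directions. The paper uses the remark to conclude that the overlap $M$ is \emph{not} a double occurrence word, picks a letter with a single occurrence in $M$, and then counts: that letter must have its second occurrence in $L$ (since $s_1$ is double occurrence) and also in $R$ (since $s_2$ is), giving three occurrences in $w$. You instead use the localization observation (a letter of $M$ lies in both $s_1$ and $s_2$, so both of its occurrences are trapped in $[i_1,j_1]\cap[i_2,j_2]=M$) to show directly that $M$ \emph{is} a double occurrence word, which already contradicts Remark~\ref{rem:preandpostnotDO} applied to the factorization $s_1=L\cdot M$. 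Your route is a bit cleaner since it avoids the occurrence count; note, though, that once you have shown $M$ is a double occurrence word you are done---the further argument that $L$ is also a double occurrence word is correct but unnecessary.
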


%Proof for first lemma
\begin{proof}
Assume to the contrary that $s_1$ and $s_2$ have at least one letter in common.  
First, consider the case that there exists a subword separating $s_1$ and $s_2$, that is $w = u_1 s_1 u_2 s_2 u_3$. 
However, since $s_1$ and $s_2$ are double occurrence words (Remark~\ref{rem:DOseqs}),
a separation  would contradict the assumption that $w$ is double occurrence. 
Note that the outcome is the same if we let any combination of $u_1$, $u_2$ and $u_3$ be empty words.

Then suppose the subwords $s_1$ and $s_2$ have an overlap, 
meaning that without loss of generality we can write $s_1 = v_1u$ and $s_2 = uv_2$. 
Since $s_1 \not\sqsubseteq s_2$ and $s_2 \not\sqsubseteq s_1$, it follows that $v_1$ and $v_2$ are non-empty.
However, $u$ can not be a double occurrence word (Remark~\ref{rem:preandpostnotDO}). 
Then there exists a letter $a$ in $u$ such that $a$ has only one occurrence in $u$. 
However, since $s_1$ and $s_2$ are double occurrence words (Remark~\ref{rem:DOseqs}), then $a$ has at least 3 occurrences in $w$. 
This contradicts the fact that $w$ is a double occurrence word.
\end{proof}

Directly from Definition~\ref{def:maximal} we obtain the following corollary.

%Maximal subwords are disjoint
\begin{crl}
	If $u_1$ and $u_2$ are distinct maximal subwords of a double occurrence word $w$, then $u_1$ and $u_2$ are disjoint words.
\end{crl}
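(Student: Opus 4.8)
Looking at this corollary, I need to show that distinct maximal subwords are disjoint. The key resource is the preceding lemma, which establishes disjointness for any two subwords in $\R$ when neither contains the other.

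The plan is to reduce the corollary to a direct application of the lemma by ruling out the containment case. Let $u_1$ and $u_2$ be distinct maximal subwords of $w$. By Definition~\ref{def:maximal}, both $u_1$ and $u_2$ belong to $\R$, so the lemma applies provided I can verify its hypothesis that neither is a subword of the other. The forward direction is where the maximality definition does the work.

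First I would suppose for contradiction that the hypothesis of the lemma fails, say $u_1 \sqsubseteq u_2$ (the case $u_2 \sqsubseteq u_1$ is symmetric). Since $u_1$ is a maximal subword, applying Definition~\ref{def:maximal} with $u = u_1$ and $v = u_2$ — where we have $u_1 \sqsubseteq u_2 \sqsubseteq w$ and $u_2 \in \R$ — forces $u_1 = u_2$. But this contradicts the assumption that $u_1$ and $u_2$ are distinct. Hence neither $u_1 \sqsubseteq u_2$ nor $u_2 \sqsubseteq u_1$ holds. Now both hypotheses of the lemma are satisfied, so the lemma immediately yields that $u_1$ and $u_2$ are disjoint.

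I do not expect a genuine obstacle here, since the statement is labeled a corollary and the author explicitly signals it follows ``directly from Definition~\ref{def:maximal}.'' The only subtlety worth attention is keeping the roles in the maximality definition straight: the definition guarantees that a maximal $u$ cannot sit properly inside any larger word $v \in \R$, so it is precisely the maximality of $u_1$ (or $u_2$) that blocks the containment, rather than any property of the containing word. Everything then collapses to a one-line invocation of the lemma.
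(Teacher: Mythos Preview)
Your argument is correct and matches the paper's intended approach: the paper offers no explicit proof beyond noting that the corollary follows ``directly from Definition~\ref{def:maximal},'' and your proposal simply spells out that observation together with the preceding lemma. There is nothing to add.
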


%%%%%%%%%%%%%%%%%%%%%%%%%%%%%%%%%%%%%%%%%
Using the notion of maximal subwords we define two reduction operations on double occurrence words.

%definiton of reduction operations
\begin{dfn}
	Let $w$ be a double occurrence word.
	We say $w'$ is obtained from $w$ by \emph{reduction operation 1} if
		$w' = w - \{u : u \text{ is a maximal subword of } w\}.$
	We say $w'$ is obtained from $w$ by \emph{reduction operation 2} if for some $a \in \Sigma$, $w' = w - a$.
	%Note that in the event that the only maximal subword of $w$ is a word of the form $aa$ for some $a \in \Sigma$,
	%then $w'$ obtained from $w$ by application of reduction operation 1 is 
	%the same as $w'$ obtained by application of reduction operation 2.
\end{dfn}

	Figure~\ref{fig:reduction-ops} gives an example of each reduction operation applied to the word $123324564561$.

%examples of reduction operations
	\begin{figure}[h]
				\centering
				\begin{subfigure}[h]{0.49\textwidth}
					\centering
					\includegraphics[scale=0.36]{./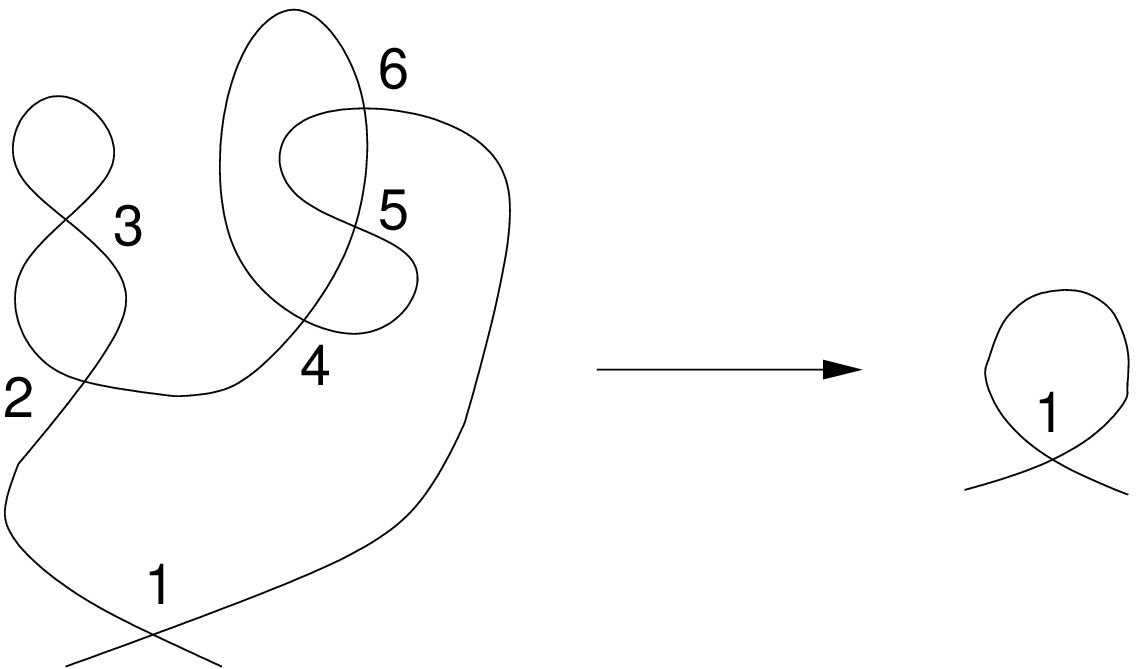}
					\vspace*{10pt}
					\caption{11 obtained from 123324564561}
				\end{subfigure}
				\begin{subfigure}[h]{0.49\textwidth}
					\centering
					\vspace*{11pt}
					\includegraphics[scale=0.36]{./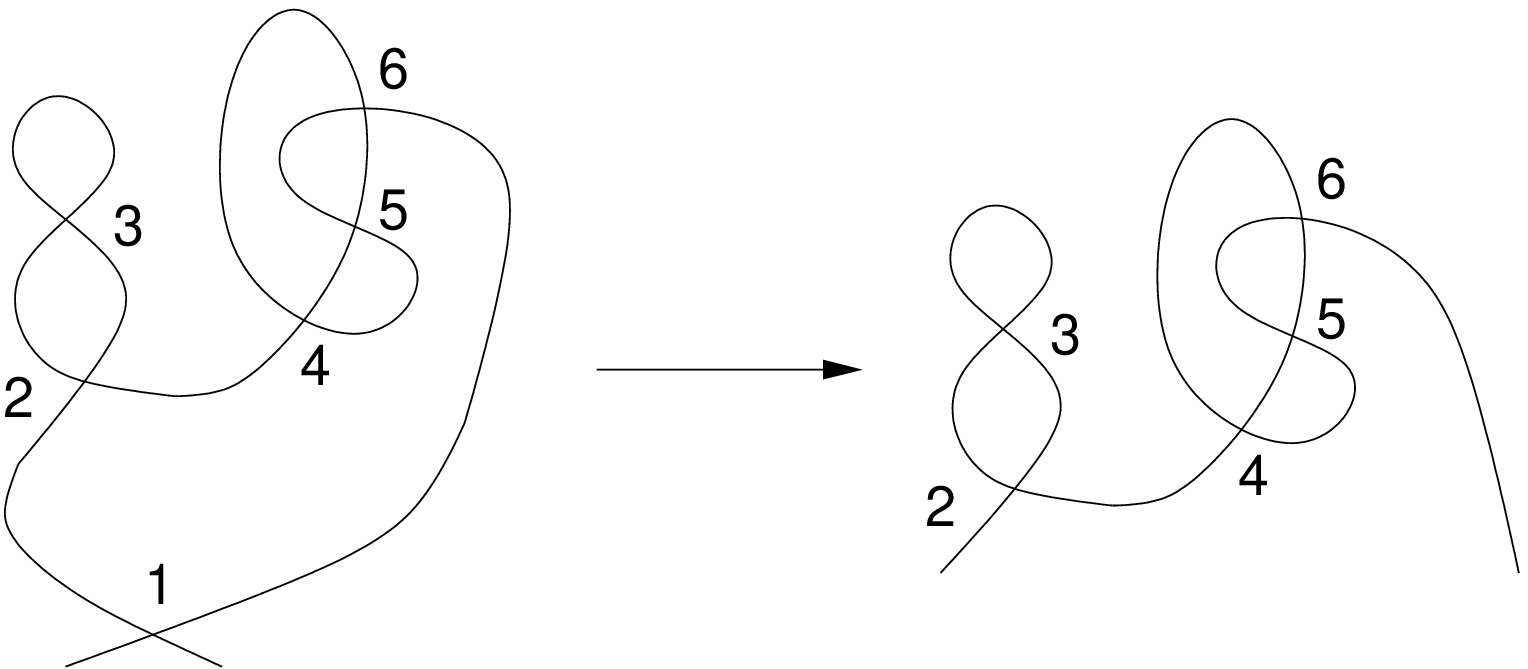}
					\vspace*{-2pt}
					\caption{2332456456 obtained from 123324564561}
				\end{subfigure}
				\caption{Examples of reduction operations 1 (left) and 2 (right)}
				\label{fig:reduction-ops}
	\end{figure}

%definition of a reduction
\begin{dfn} \label{def:reduction}
	A \emph{reduction} of $w$ is a sequence of words $(u_0, u_1, \ldots, u_n)$ in which
	\begin{inparaenum}[(1)] \itemsep1pt
		\item $u_0 = w$,
		\item for $0 \leq k < n$, $u_{k+1}$ is obtained from $u_{k}$ by application of one of the reduction operations, and 
		\item $u_n = \epsilon$.
	\end{inparaenum}
\end{dfn}

	Note that every double occurrence word has at least one reduction 
	(in any case we can remove a letter from $u_i$ to obtain a possible $u_{i+1}$),
	and most double occurrence words, in fact, have many distinct reductions.

%example of reductions
\begin{exm}
	Consider $w = 1234554231$.
	Applying reduction operation 1 to $w$ gives $w_1 = 123231$.
	A second application of the reduction operation to $w_1$ gives $11$, and so a third application gives $\epsilon$.
	Then $R_1 = (1234554231, 123231, 11, \epsilon)$ is a reduction of $w$.
	For a second example, if we apply reduction operation 2 to $w$ by removing the letter 3, we get $w_1' = 12455421$.
	Since $w_1'$ is a return word, an application of reduction operation 1 to $w_1'$ gives $\epsilon$.
	Then $R_2 = (1234554231, 12455421, \epsilon)$ is also a reduction of $w$.
	\label{ex:reductions}
\end{exm}

%definition of 1-reducible
\begin{dfn}
	A double occurrence word $w$ is called \emph{$1$-reducible} if there exists a reduction $(u_0,u_1, \ldots, u_n)$ of $w$
	such that for all $0 \leq i < n$, $u_{i+1}$ is obtained from $u_i$ by application of reduction operation $1$.
\end{dfn}

	In the previous example we saw that $w = 1234554231$ is $1$-reducible by reduction $R_1$.
	In the following section we give a characterization of words which are $1$-reducible.

%definition of nesting index
\begin{dfn} 
	$\NI := \min\{n: (u_0, u_1, \ldots, u_n) \text{ is a reduction of } w\}$ is the \emph{nesting index} of the double occurrence word $w$.
\end{dfn}

	In \cite{angeleska2009} it is shown that two assembly graphs $\Gamma_1$ and $\Gamma_2$ are isomorphic if and only
	if the double occurrence words of $\Gamma_1$ and $\Gamma_2$ are reverse equivalent.
	Note that if $w_1$ and $w_2$ are reverse equivalent, then every repeat (return) word in $w_1$ appears
	as a repeat (return) word in $w_2$.
	Then there is a one-to-one correspondence between reductions of $w_1$ and reductions of $w_2$,
	hence, $\NIin{w_1} = \NIin{w_2}$.
	It follows that the nesting index is an invariant of assembly graphs.

	Note that in Example~\ref{ex:reductions}, the second word in $R_1$ is obtained from $w$ by removing a subword of length $4$.
	In $R_2$ the second word is obtained from $w$ by a letter removal.
	Although we removed less from $w$ in the beginning for $R_2$, 
	the number of reduction operations needed to reduce $w$ to the empty word was less than in $R_1$.
	This example shows that a greedy algorithm based on the number of letters to be removed would be incorrect 
	for the computation of the nesting index.
	The current 
	algorithm\footnote{Implemented in C code, readily available for download at \url{http://knot.math.usf.edu/software/NI/nest_index.c}}
	to compute the nesting index is slightly better than brute force.
	It is unknown whether there exists a more efficient algorithm to compute the nesting index of a double occurrence word.
	
	Using the aforementioned C program we were able to obtain Table~\ref{tab:NI_counts}
	which gives counts on the number of double occurrence words (labeled in ascending order) with a given size and nesting index.
	For words of size $\leq 9$ the counts for all nesting indices are given.
	For words of size $10, 11, 12$, the number of words is quite large 
	and so the computation for all nesting indices would be somewhat time consuming.
	However, the following lemma allows us to more easily compute the counts of words of size $10,11,12$
	and respective nesting indices $8,9,10$.
	
	\begin{lem}
	If $w$ and $w'$ are double occurrence words such that $w' = w - a$ for some letter $a \in \Sigma$,
	then $\NIin{w} \leq \NIin{w'} + 1$.
	\label{lem:NI-bound}
\end{lem}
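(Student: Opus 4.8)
The plan is to exploit the fact that the passage from $w$ to $w'$ is itself nothing more than a single application of reduction operation 2. The equation $w' = w - a$ says precisely that $w'$ is the letter removal of $a$ from $w$, and this is exactly what it means for $w'$ to be obtained from $w$ by reduction operation 2. Consequently, any reduction of $w'$ can be lengthened by one initial step to yield a reduction of $w$, and the stated inequality will then fall out immediately from the minimality built into the definition of the nesting index.

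First I would fix a reduction of $w'$ of minimal length, say $(u_0, u_1, \ldots, u_n)$ with $u_0 = w'$, $u_n = \epsilon$, and $n = \NIin{w'}$; such a reduction exists because every double occurrence word has at least one reduction and the nesting index is by definition realized by a reduction of smallest length. I would then prepend $w$ to this sequence and verify that $(w, u_0, u_1, \ldots, u_n)$ satisfies all three conditions in Definition~\ref{def:reduction}. Its first term is $w$ and its last term is $u_n = \epsilon$, so conditions (1) and (3) hold. For condition (2), the consecutive pair $(w, u_0) = (w, w')$ is related by reduction operation 2 applied with the letter $a$, which is exactly the hypothesis $w' = w - a$, while each pair $(u_k, u_{k+1})$ for $0 \le k < n$ is related by a reduction operation because $(u_0, \ldots, u_n)$ was assumed to be a reduction. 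Hence $(w, u_0, \ldots, u_n)$ is a bona fide reduction of $w$, and it has length $n+1$.

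Finally, since $\NIin{w}$ is the minimum length taken over all reductions of $w$, the particular reduction just constructed gives $\NIin{w} \le n + 1 = \NIin{w'} + 1$, which is the desired bound. The argument has no substantive obstacle; the only point requiring any care is the routine bookkeeping in the middle step, namely confirming that the appended sequence meets each clause of the definition of a reduction, since all operations after the initial letter removal are inherited verbatim from the chosen optimal reduction of $w'$.
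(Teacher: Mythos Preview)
Your proof is correct and follows essentially the same approach as the paper: take a minimal reduction $(u_0,\ldots,u_n)$ of $w'$ and prepend $w$ to obtain a reduction $(w,u_0,\ldots,u_n)$ of $w$ of length $n+1$, yielding $\NIin{w}\le\NIin{w'}+1$. The only difference is that you spell out the verification of Definition~\ref{def:reduction} in more detail than the paper does.
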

\begin{proof}
	If $\NIin{w'} = n$, let $(u_0, u_1, \ldots, u_n)$ be a reduction of $w'$.
	Then \linebreak $(w,u_0, u_1, \ldots, u_n)$ is a reduction of $w$ in which $u_0 = w' = w-a$.
	Thus, $\NIin{w}  \leq  n + 1 = \NIin{w'} + 1$.
\end{proof}

\begin{table}[htb]
\centering
%JONOSKA COMMENT: Do any of these appear in the OEIS.
\scalebox{0.78}{
\begin{tabular}{|c|rrrrrrrrrr|}
	\hline
		Size	& \multicolumn{10}{|c|}{Nesting Index}			\\
	&1	&2	&3	&4	&5	&6	&7	&8	&9	&10 \\ \hline
	1	&1	&0	&0	&0	&0	&0	&0	&0	&0	&0 \\
	2	&3	&0	&0	&0	&0	&0	&0	&0	&0	&0 \\
	3	&7	&8	&0	&0	&0	&0	&0	&0	&0	&0 \\
	4	&17	&78	&10	&0	&0	&0	&0	&0	&0	&0 \\
	5	&41	&424	&479	&1	&0	&0	&0	&0	&0	&0 \\
	6	&99	&1915	&6248	&2133	&0	&0	&0	&0	&0	&0\\
	7	&239	&7914	&50247	&69879	&6856	&0	&0	&0	&0	&0\\
	8	&577	&31370	&328810	&1004642	&648065	&13561	&0	&0	&0	&0\\
	9	&1393	&122530	&1927900	&10125920	&17081040	&5187788	&12854	&0	&0	&0\\
	10	&	&	&	&	&	&	&	&2019	&0	&0\\
	11	&	&	&	&	&	&	&	&	&4	&0\\
	12	&	&	&	&	&	&	&	&	&	&0 \\ \hline
\end{tabular}
}
\caption{Number of double occurrence words (labeled in ascending order) with a given size and nesting index}
\label{tab:NI_counts}
\end{table}

In the concluding remarks, we use Table~\ref{tab:NI_counts}
to give a conjecture on the minimum number of letters needed to construct a word with nesting index $n \in \mathbb{N}$.

%%%%%%%%%%%%%%%%%%%%%%%%%REDUCTIONS%%%%%%%%%%%%%%%%%%%%%%%%%%%%%%%
\section{A study on the nesting index}
Chord diagrams and circle graphs  
are useful tools in the study of double occurrence words, for example in \cite{chord}.
In the present section we use chord diagrams and circle graphs as tools to study the nesting index of double occurrence words.
We give a characterization of $1$-reducible words.
This characterization allows us to show that for arbitrary $n\geq 0$
there exists a word with nesting index $n$.
We conclude the section with some open questions involving the nesting index.

\subsection{Nesting index and chord diagram}

	A \emph{chord diagram} is a pictorial representation of a double occurrence word $w$ obtained by arranging the $2n$
	letters of $w$ around the circumference of a circle and then for each letter, joining the two occurrences of the same letter	by a chord of the circle.
	A chord diagram $\C'$ is said to be a \emph{sub-chord diagram} of a chord diagram $\C$ if
	the chords of $\C'$ make up some subset of the chords of $\C$.
	Note that every double occurrence word corresponds to some chord diagram 
	but also that two distinct double occurrence words (possibly in ascending order) 
	may correspond to two chord diagrams which differ only by the labeling of chords.
	Occasionally a \emph{base point} in a chord diagram $\C$
	 is used to point out the first letter of the word that corresponds to $\C$.

\begin{exm}
%JONOSKA COMMENT: specify that they are repeat and return words
	Figure~\ref{fig:returnchord} and Figure~\ref{fig:repeatchord} are chord diagram representations of the 
	return word 12344321 and repeat word 12341234, respectively.
\end{exm}

\begin{figure}[hbt]
	\centering
	\begin{subfigure}{0.49\textwidth}
		\centering
		\includegraphics[scale=0.6]{./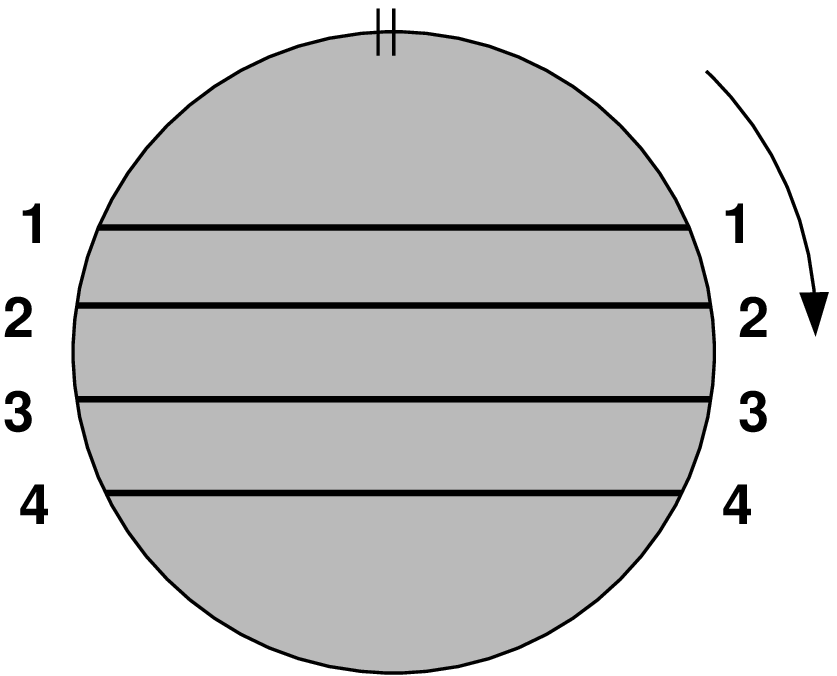}
		\caption{Chord diagram for the return word 12344321}
		\label{fig:returnchord}
	\end{subfigure}
	\begin{subfigure}{0.49\textwidth}
		\centering
		\includegraphics[scale=0.6]{./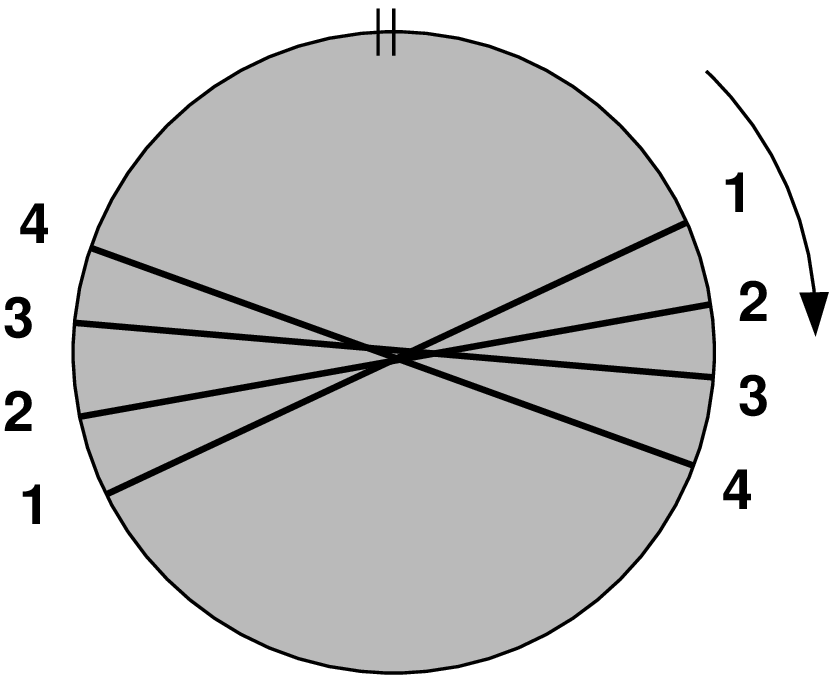}
		\caption{Chord diagram for the repeat word 12341234}
		\label{fig:repeatchord}
	\end{subfigure}
	\caption{Chord diagram representations of double occurrence words}
\end{figure}

\begin{rem} \label{rem:abchords}
	In the chord diagram of any return word no pair of chords intersects.
	In the chord diagram of any repeat word every pair of chords intersects.
\end{rem}

\begin{rem} \label{rem:doswchords}
	If $w$ is a double occurrence word that corresponds to a chord diagram $\C$ 
	and $u \sqsubseteq w$ is also a double occurrence word,
	then the chords in $\C$ associated with $u$ have no intersection 
	with the chords in $\C$  that correspond to the symbols in $w - u$.
\end{rem}

\begin{thm} \label{lem:letter-to-remove}
	Let $w$ be a double occurrence word.
	Then $w$ is $1$-reducible if and only if the chord diagram of $w$ 
	does not contain the chord diagram in Figure~\ref{fig:123213} as a sub-chord diagram.
\end{thm}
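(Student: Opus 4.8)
The first step in my plan is to restate the hypothesis in the language of the interlacement (circle) graph $G(w)$, whose vertices are the letters of $w$ and whose edges join letters whose chords cross. The chord diagram of Figure~\ref{fig:123213}, namely that of $123213$, is---up to rotation, reflection, and relabeling---the unique diagram on three chords in which one chord crosses the other two while those two are mutually non-crossing (note that nesting versus side-by-side placement of the two non-crossing chords is \emph{not} an invariant of a chord diagram, so there is only one such diagram). Consequently the chord diagram of $w$ contains it as a sub-chord diagram precisely when $G(w)$ has three vertices inducing a path, and ``not containing it'' is equivalent to $G(w)$ being a disjoint union of cliques. With this dictionary the theorem reads: $w$ is $1$-reducible if and only if $G(w)$ is a disjoint union of cliques.

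For the forward direction I would argue the contrapositive through an invariance lemma: \emph{the three chords of an induced path are never deleted by reduction operation $1$.} Suppose chords $a,b,c$ satisfy that $c$ crosses both $a$ and $b$ while $a,b$ do not cross, and suppose $c$ belonged to some maximal subword $s \in \R$. Since $s$ is itself a double occurrence subword, Remark~\ref{rem:doswchords} forces every chord crossing a chord of $s$ to lie in $s$ as well; hence $a,b \in s$. But a repeat word has all pairs of its chords crossing and a return word has none (Remark~\ref{rem:abchords}), so no $s \in \R$ can carry the mixed pattern on $\{a,b,c\}$---a contradiction. Thus $c$ lies in no maximal subword, and the same reasoning then shows $a$ and $b$ lie in none either (otherwise $c$ would be dragged in). Operation $1$ therefore removes none of $a,b,c$, and erasing other letters preserves the crossing relations among the three, so the induced path survives every application of operation $1$. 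Since $\epsilon$ carries no such configuration, a word containing an induced path admits no reduction using operation $1$ alone; that is, it is not $1$-reducible.

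For the reverse direction I would induct on the size of $w$ (the base case $w = aa$ being immediate), the whole content being the claim that a nonempty $w$ with $G(w)$ a disjoint union of cliques contains a repeat or return subword, so that operation $1$ strictly shortens $w$; the resulting word has interlacement graph an induced subgraph of $G(w)$, hence again a disjoint union of cliques, and the induction closes. To locate such a subword, take the clique $C$ that is the connected component of the first letter, and set $m=|C|$. A set of pairwise-crossing chords joins antipodal endpoints: reading along the arc between the two endpoints of any one of them, each of the other $m-1$ chords contributes exactly one endpoint (and none both), forcing that arc to hold $m-1$ endpoints, so $C$ realizes the repeat pattern $\sigma_1\cdots\sigma_m\sigma_1\cdots\sigma_m$. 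Any chord outside $C$ crosses nothing in $C$, and a short counting argument---the two endpoints of a $C$-chord are exactly $m$ apart, so an arc of fewer than $m+1$ positions cannot swallow a whole chord---shows both of its endpoints lie in a single gap of this pattern. Hence $w$ is the repeat skeleton of $C$ with double occurrence blocks inserted into its gaps: if every block is empty then $w$ is itself a repeat word, and otherwise any nonempty block is a shorter double occurrence subword whose interlacement graph is again a union of cliques, to which the inductive hypothesis supplies the desired repeat or return factor.

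The main obstacle is exactly this existence claim in the reverse direction. The hypothesis that $G(w)$ is a union of cliques is a statement about crossings, which are invariant under re-rooting the chord diagram, whereas a repeat or return subword is a \emph{contiguous} factor and so is sensitive to linear position. The decomposition of $w$ around a single clique is what converts the global crossing structure into a concrete contiguous block, and verifying that an outside chord cannot straddle two gaps is the one place where the precise geometry of the repeat pattern---rather than merely its crossing graph---must be used.
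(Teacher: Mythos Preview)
Your argument is correct and takes a genuinely different route from the paper's. The key move is your reformulation: avoiding $\C_{1\times 2}$ is exactly the condition that the interlacement graph $G(w)$ has no induced $P_3$, i.e., is a disjoint union of cliques. With this in hand, your forward direction proves directly that the three chords of an induced $P_3$ survive every application of operation~1 (this is essentially the content of what the paper later isolates as Lemma~\ref{lem:chords-stay}); the paper instead runs an induction on size, peeling off $\M_w$ and arguing that $\C_{1\times 2}$ cannot live in the removed pieces nor in the remainder. For the reverse direction the contrast is sharper: the paper deletes an arbitrary letter $a$, applies the inductive hypothesis to $w-a$, and then performs a case analysis on $|\M_{w-a}|\le 2$ and on how the chord of $a$ interacts with a single maximal subword of $w-a$ to exhibit a maximal subword of $w$. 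Your argument is structural: the clique $C$ of the first letter has pairwise-crossing chords, hence (by the antipodal/counting observation) the $C$-letters occur in $w$ as a repeat skeleton $\sigma_1\cdots\sigma_m\sigma_1\cdots\sigma_m$, and every non-$C$ chord, crossing no diameter, has both endpoints in a single gap; so either $w$ is itself a repeat word or some gap is a strictly shorter double-occurrence factor to which the inner induction applies. This buys you a clean recursive description of all $1$-reducible words (repeat skeletons with double-occurrence blocks in the gaps, recursively) and avoids the paper's casework entirely; the paper's approach, in exchange, stays closer to the raw definitions and does not require the circle-graph translation or the counting lemma about diameters. Your terse justification that a non-$C$ chord cannot straddle two gaps is correct---if it did, one side of it would carry at most $m$ consecutive $C$-endpoints, too few to contain both ends of any $C$-chord, forcing a crossing---but it would benefit from one more sentence making that pigeonhole explicit.
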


	\begin{figure}[h] 
		\centering
		\includegraphics[scale = 0.6]{./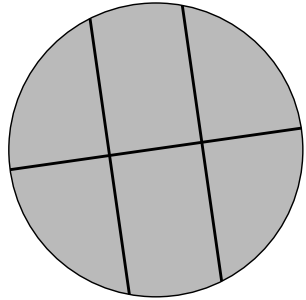}
		\caption{Chord diagram $\C_{1 \times 2}$ associated with double occurrence words 121323, 123213, and 123132}
		\label{fig:123213}	
	\end{figure}

\begin{proof}
	Let us refer the chord diagram of $w$ as $\C$ and the chord diagram in Figure~\ref{fig:123213} as $\C_{1 \times 2}$.
	The proof follows by induction on the size of $w$.
	One can easily verify that all words of size $1$ and $2$ are $1$-reducible 
	and their chord diagrams have less than three chords, hence, do not contain $\C_{1 \times 2}$ as a sub-chord diagram.
	
	Now let $n \geq 3$ and suppose that for arbitrary $k < n$, if $w$ is a word of size $k$, then the theorem holds.
	For the final part of the proof we treat the right and left implications separately.

	($\Rightarrow$):
	Let $w$ be of size $n$ and suppose $w$ is $1$-reducible.
	Let $\M_w$ be the set of maximal subwords of $w$.
	Then $w' = w - \M_w$ is $1$-reducible,
	hence, by induction hypothesis, the chord diagram of $w'$ does not contain $\C_{1 \times 2}$ as a sub-chord diagram.
	By Remark~\ref{rem:doswchords}, the chords in $\C$ associated with the words in $\M_w$,
	have no intersection with the chords in $\C$ associated with $w'$.
	Then if $\C_{1 \times 2}$ is a sub-chord diagram of $\C$, $\C_{1 \times 2}$ must be a sub-chord diagram of
	the chords in $\C$ associated with the words in $\M_w$.
	However, since a pair of chords associated with two distinct double occurrence words in $\M_w$ cannot intersect (Remark~\ref{rem:doswchords}),
	it follows that $\C_{1 \times 2}$ must be a sub-chord diagram of the chords associated with a single word $u \in \M_w$.
	But this cannot be the case by Remark~\ref{rem:abchords}.
	Thus, $\C$ does not contain $\C_{1 \times 2}$ as a sub-chord diagram and so the right implication is proved.
	
	($\Leftarrow$):
	Let $w$ be a word of size $n$ and suppose $\C$ does not contain $\C_{1 \times 2}$ as a sub-chord diagram.
	Let $a \in \Sigma$ and let $\C'$ denote the chord diagram of $w' = w - a$.
	Since $\C'$ does not contain $\C_{1 \times 2}$ as a sub-chord diagram, it follows by induction hypothesis that $w'$ is $1$-reducible.
	Let $\M_{w'}$ denote the set of maximal subwords of $w'$.
	
	%TODO: consider making the following paragraph into a lemma if possible
	We claim that $w$ has a maximal subword.
	If for some $u \in \M_{w'}$, $u$ is a subword of $w$, then we are done.
	Since $a$ has only two occurrences in $w$, it follows that if $|\M_{w'}| \geq 3$,
	then there exists $u \in \M_{w'}$ such that $u \sqsubseteq w$ and we are done.
	Assume $|\M_{w'}| \leq 2$.
	If $\M_{w'} = \{u,v\}$ such that $u$ and $v$ are not subwords of $w$,
	then we can write $u = u_1u_2$ and $v = v_1v_2$ such that $u_1au_2$ and $v_1av_2$ are subwords of $w$.
	Since $u$ and $v$ are not subwords of $w$, we have that $u_1$, $u_2$, $v_1$, and $v_2$ are non-empty.
	Since $u_1$, $u_2$, $v_1$, and $v_2$ are non-empty, it follows that they cannot be double occurrence words (Remark~\ref{rem:preandpostnotDO}),
	hence, the chord for $a$ intersects a chord from $u$ and a chord from $v$.
	Since the chords from $u$ and $v$ do not intersect by Remark~\ref{rem:doswchords}, 
	it follows that $\C_{1 \times 2}$ is a sub-chord diagram of $\C$ which is a contradiction.
	Lastly, we consider $\M_{w'} = \{u\}$ in which $u$ is not a subword of $w$.
	Let us write $u = u_1u_2u_3$ so that $u' = u_1au_2au_3$ is a subword of $w$.
	If $u_2$ is empty, then $aa$ is a subword of $w$ which is maximal or contained in a maximal subword of $w$.
	Assume $u_2$ is non-empty.
	If $u$  is a repeat word, then the chord of $a$ in $w$ must intersect all chords of $u$,
	else, $\C_{1 \times 2}$ is a sub-chord diagram of $\C$.
	Since all of the chords of $u'$ intersect, then the word is a maximal repeat word in $w$.
	Now assume $u = a_1a_2 \cdots a_n a_n \cdots a_2 a_1$ is a return word.
	Then the chord of $a$ can intersect at most one chord from $u$, 
	else, $\C_{1 \times 2}$ is a sub-chord diagram of $\C$.
	Suppose $a$ intersects a chord, say with label $a_i$.
	If $i = n$, then $aa_naa_n$ or $a_naa_na$ is a maximal repeat word in $w$.
	If $i \neq n$, then $a_{i+1}a_{i+2} \cdots a_n a_n \cdots a_{i+2} a_{i+1}$ is a maximal return word in $w$.
	Otherwise, assume $a$ intersects no chords from $u$.
	Then $u'$ is a maximal return word of $w$.
	
	By the above claim, we can apply reduction operation $1$ to $w$ to obtain a word $w'$ of size $< n$.	
	Since $\C$ does not contain $\C_{1 \times 2}$ as a sub-chord diagram, the chord diagram of $w'$ also does not contain $\C_{1 \times 2}$.
	By induction hypothesis, $w'$ is $1$-reducible.
	Thus, $w$ is $1$-reducible.
\end{proof}

The preceding theorem tells us that if $\C_{1 \times 2}$ is a sub-chord diagram of 
$\C$ which corresponds to a double occurrence word $w$,
then in any reduction of $w$ we absolutely must apply reduction operation 2.
What it does not tell us is how many times we must apply reduction operation 2.
The following lemma and theorem aim to do just that.

\begin{lem}
	Let $w$ be a double occurrence word with chord diagram $\C$
	and let $w'$ be the word obtained from $w$ by application of reduction operation 1 with chord diagram $\C'$.
	If $\C_{1 \times 2}$ is a sub-chord diagram of $\C$ and $b$ is a chord in $\C_{1 \times 2}$,
	then $b$ is also a chord in $\C'$.
	\label{lem:chords-stay}
\end{lem}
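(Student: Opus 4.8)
The plan is to reduce everything to a statement about which letters survive reduction operation 1. Reduction operation 1 replaces $w$ by $w - \{u : u \text{ is a maximal subword of } w\}$, and by the corollary the maximal subwords are pairwise disjoint, so this removal deletes exactly those letters of $w$ that lie in some maximal subword (a repeat or return word). Consequently a chord $b$ appears in $\C'$ if and only if $b$ lies in no maximal subword of $w$. Thus it suffices to prove that none of the three chords of the fixed sub-chord diagram $\C_{1 \times 2}$ belongs to any maximal subword, and the whole argument will be forced by the two crossing remarks.

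First I would record the crossing pattern of $\C_{1 \times 2}$ (Figure~\ref{fig:123213}). Labelling its three chords $b_1, b_2, b_3$, one checks directly from any of the associated words $121323$, $123213$, $123132$ that there is a single \emph{central} chord, say $b_3$, which crosses both $b_1$ and $b_2$, while $b_1$ and $b_2$ do not cross each other. The point I want to extract is that every one of the three chords crosses at least one of the other two. Next I would isolate a propagation property coming from Remark~\ref{rem:doswchords}: if a chord $c$ lies in a double occurrence subword $u$ of $w$ and $c$ crosses a chord $d$, then $d$ lies in $u$ as well; otherwise $d$ would be a chord of $w - u$ crossing the chord $c$ of $u$, contradicting Remark~\ref{rem:doswchords}.

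Now suppose for contradiction that some chord $b \in \{b_1, b_2, b_3\}$ lies in a maximal subword $u$, which is then a repeat word or a return word. Using the propagation property together with the crossing pattern, I would show that all three chords must lie in $u$: if $b = b_3$ then $b_1, b_2 \in u$ immediately; if $b = b_1$ (the case $b = b_2$ being symmetric) then $b_3 \in u$ because $b_1$ crosses $b_3$, and then $b_2 \in u$ because $b_3$ crosses $b_2$. But by Remark~\ref{rem:abchords} the chords of a return word are pairwise non-crossing and the chords of a repeat word are pairwise crossing, whereas among $b_1, b_2, b_3$ the chords $b_1, b_3$ cross while $b_1, b_2$ do not. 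This contradicts both possibilities, so $b$ lies in no maximal subword and therefore survives in $\C'$.

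The proof is short and I do not anticipate a genuine obstacle; the only point requiring care is the initial bookkeeping that ``survives reduction operation 1'' is equivalent to ``lies in no maximal subword,'' which rests on the operation being a removal of the (pairwise disjoint) maximal subwords. Everything after that is a direct deduction from Remarks~\ref{rem:doswchords} and \ref{rem:abchords}, so the essential content is really the observation that $\C_{1\times2}$ exhibits mixed crossing behaviour while any single repeat or return word cannot.
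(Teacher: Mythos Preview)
Your argument is correct and mirrors the paper's own proof: both assume a chord of $\C_{1\times2}$ lies in some maximal subword $u$, use Remark~\ref{rem:doswchords} to force all three chords into $u$, and then invoke Remark~\ref{rem:abchords} to contradict $u$ being a repeat or return word. Your write-up is simply more explicit about the propagation step and the case split on which chord is assumed to be removed.
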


\begin{proof}
Assume to the contrary that $b$ is not a chord in $\C'$.
Then $b$ must belong to some maximal subword $u$ of $w$.
Since $b$ is a chord in $\C_{1 \times 2}$, $b$ either intersects the other two chords in $\C_{1 \times 2}$,
or $b$ intersects another chord in $\C_{1 \times 2}$ which intersects the third chord in $\C_{1 \times 2}$.
Then by Remark~\ref{rem:doswchords}, since $u$ is a double occurrence word,
we have that the three letters that correspond to the chords in $\C_{1 \times 2}$ are letters in $u$,
hence, $\C_{1 \times 2}$ is a sub-chord diagram of the chords that correspond to $u$.
However, since $u$ is a repeat word or a return word, then by Remark~\ref{rem:abchords}, this cannot be the case.
This gives a contradiction.
\end{proof}

\begin{thm}
	Let $w$ be a double occurrence word with corresponding chord diagram $\C$ and let $2 \leq n \leq m$ be integers.
	If $\C$ contains the chord diagram in Figure~\ref{fig:mBYn} as a sub-chord diagram, then $\NI \geq n+1$.
	\label{thm:mBYn}
\end{thm}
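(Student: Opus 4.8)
The plan is to extract from the reduction process a single integer-valued monovariant and control how each reduction operation can change it. For a double occurrence word $u$ let me write $h(u)$ for the largest integer $k$ such that the chord diagram of $u$ contains $\C_{k \times k}$ (equivalently, $k$ mutually non-crossing chords each of which crosses each of $k$ other mutually non-crossing chords) as a sub-chord diagram, with $h(u)=0$ when no two chords cross. Since $\C_{m \times n}$ contains $\C_{n \times n}$ as a sub-chord diagram (keep $n$ of the $m$ chords of the larger family and all $n$ of the smaller), the hypothesis gives $h(w) \geq n$. The goal then becomes: show that every reduction of $w$ uses at least $n+1$ operations.

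First I would record how the two operations move $h$. Removing chords never creates crossings, so both operations satisfy $h(u') \leq h(u)$. For reduction operation $2$, which deletes a single chord $a$, a fixed copy of $\C_{h \times h}$ loses at most the chord $a$, and deleting one chord from $\C_{k \times k}$ leaves a diagram containing $\C_{(k-1)\times(k-1)}$; hence $h(u') \geq h(u) - 1$. For reduction operation $1$ the key point — and the place Lemma~\ref{lem:chords-stay} is used — is that when $h(u)=k\geq 2$ every chord of a fixed $\C_{k \times k}$ lies inside a copy of $\C_{1 \times 2}$ (a chord of the first family together with two crossed chords of the second, and symmetrically), so by Lemma~\ref{lem:chords-stay} none of these chords is removed; since their mutual crossings are preserved, $\C_{k \times k}$ survives and $h(u')=h(u)$. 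Thus operation $1$ cannot lower $h$ while $h\geq 2$, while operation $2$ lowers it by at most $1$.

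With these facts the counting is direct. Fix any reduction $w=u_0,u_1,\ldots,u_N=\epsilon$. Since $h(u_0)\geq n\geq 2$, $h(u_N)=0$, and $h$ drops by at most $1$ per step and cannot fall from a value $\geq 2$ straight to $0$, there is a first index $k$ with $h(u_k)=1$; moreover $h(u_{k-1})=2$ and the $k$-th step is an operation $2$. Because operation $1$ preserves $h$ throughout the regime $h\geq 2$ (all of $u_0,\dots,u_{k-1}$ lie in it) and operation $2$ drops $h$ by at most $1$, passing from $h=n$ down to $h=2$ needs at least $n-2$ operation-$2$ steps among the first $k-1$, so $k-1\geq n-2$, i.e. $k\geq n-1$. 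Finally I would argue that $u_k$ still contains $\C_{1\times 2}$: the copy of $\C_{2\times 2}$ present in $u_{k-1}$ is destroyed by the $k$-th step, so the deleted chord is one of its four chords, and the remaining three form a $\C_{1\times 2}$ whose crossings are untouched. By Theorem~\ref{lem:letter-to-remove}, $u_k$ is then not $1$-reducible, so the tail $u_k,\ldots,u_N$ has length $N-k\geq \NIin{u_k}\geq 2$. Combining, $N\geq (n-1)+2=n+1$.

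The main obstacle is exactly this last step, and more generally the gap that the crude monovariant alone yields only $N\geq n-1$: one must see that the exit from the regime $h\geq 2$ cannot be performed by operation $1$ (forcing an operation $2$ precisely at the transition) and that the word reached with $h=1$ is still not $1$-reducible, so that two further operations are unavoidable. Verifying that the transition leaves a genuine $\C_{1\times 2}$ — rather than, say, a single pair of crossing chords which could be cleared in one step — is the delicate point, and it is here that the characterization of Theorem~\ref{lem:letter-to-remove} and the survival principle of Lemma~\ref{lem:chords-stay} combine to supply the extra $+2$ over the naive bound.
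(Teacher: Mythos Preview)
Your argument is correct. Both your proof and the paper's rest on the same engine---Lemma~\ref{lem:chords-stay} guarantees that as long as the relevant grid of chords is intact, reduction operation~1 cannot touch it, so operation~2 must do the work---but the bookkeeping is organized differently. The paper tracks the specific copy of $\C_{n\times m}$ directly: it observes that every chord of $\C_{n\times m}$ (and of whatever remains of it after some deletions) sits in a $\C_{1\times 2}$, so operation~1 removes none of them; it then counts that at least $n$ applications of operation~2 are needed before the grid is broken (the three termination cases), after which one further operation is still required. You instead abstract this into the monovariant $h(u)=\max\{k:\C_{k\times k}\text{ is a sub-chord diagram of }u\}$, show operation~1 fixes $h$ while $h\geq 2$ and operation~2 drops it by at most one, and then spend an extra step at the end invoking Theorem~\ref{lem:letter-to-remove} to certify that the word at $h=1$ still needs two more operations.

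The trade-off: the paper's version is slightly sharper (it exhibits at least $n$ operation-2 steps, versus your $n-1$) and self-contained, needing only Lemma~\ref{lem:chords-stay}; your version is cleaner to state as a potential-function argument and makes the role of the $\C_{1\times 2}$ obstruction at the endgame explicit, at the cost of importing Theorem~\ref{lem:letter-to-remove}. One small point worth making explicit in your write-up: the implication ``not $1$-reducible $\Rightarrow \NIin{u_k}\geq 2$'' uses that $u_k$ has at least three letters (so a single operation~2 cannot empty it); this is immediate from the surviving $\C_{1\times 2}$, but deserves a word.
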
 
\begin{figure}[hbt]
	\centering
	\includegraphics[scale=0.5]{./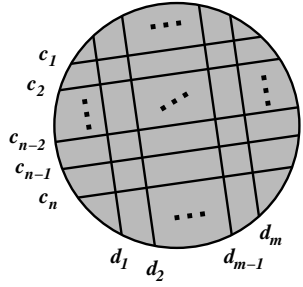}
	\caption{Chord diagram $\C_{n \times m}$}
	\label{fig:mBYn}
\end{figure}

\begin{proof}
	Let $\C_{n \times m}$ denote the chord diagram in Figure~\ref{fig:mBYn}.
	%Suppose $(u_0,u_1,\cdots,u_n)$ is a reduction of $w$.
	Note that each chord in $\C_{n \times m}$ is a chord in 
	some $\C_{1 \times 2}$ as a sub-chord diagram of $\C_{n \times m}$, hence, as a sub-chord diagram of $\C$.
	Then by Lemma~\ref{lem:chords-stay}, if we apply reduction operation 1 some number of times to $w$ to obtain $w'$,
	then $\C_{n \times m}$ remains a sub-chord diagram of the chord diagram of $w'$.
	Then we must apply reduction operation 2 to remove any letter from $w$ corresponding to some chord in $\C_{n \times m}$.
	Further, note that if we remove a chord from $\C_{n \times m}$ by removing the corresponding letter with reduction operation 2,
	then every chord in the resulting chord diagram $\C_{n \times m}'$  is also a chord in some $\C_{1 \times 2}$ as a sub-chord diagram of $\C_{n \times m}'$.
	Hence, by Lemma~\ref{lem:chords-stay}, we are required to apply reduction operation 2 again.
	This necessity of applying reduction operation 2 continues until one of the following occurs.
	\begin{enumerate}[(i)] \itemsep1pt \parskip0pt \parsep0pt
		\item The letters that correspond to the chords $c_1,\ldots,c_n$ have all been removed by $n$ applications of reduction operation 2,
		\item the letters that correspond to the chords $d_1 \ldots, d_m$ have all been removed by $m$ applications of reduction operation 2, or
		\item the letters that correspond to $m-1$ chords $d_i$ and $n-1$ chords $c_j$ have all been removed by $n+m-2$ applications of reduction operation 2.
	\end{enumerate}
	Since $n \leq m \leq n+m -2$, it follows that we must apply reduction operation 2 a minimum of $n$ times for any reduction of $w$.
	This gives $\NI \geq n$.
	Now since there are still chords left over from $\C_{n \times m}$, we see that $w$ has not been reduced to the empty word 
	and so at least one additional reduction operation is necessary to complete a reduction of $w$.
	Thus, $\NI \geq n+1$.
\end{proof}

\begin{crl}
	For all $n \in \mathbb{N}$, there exists a double occurrence word $w$ with $\NI = n$.
\end{crl}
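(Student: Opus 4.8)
The plan is to pin down the nesting index from both sides: Theorem~\ref{thm:mBYn} already supplies a lower bound for any word whose chord diagram contains a full grid $\C_{k \times k}$, so it suffices to exhibit, for each target value, a concrete word realizing such a grid \emph{exactly} together with a reduction of matching length. I would treat the three smallest values by hand and obtain every value $\geq 3$ from a single family.

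For $k \geq 2$, consider the word
\[
	W_k = c_1 c_2 \cdots c_k \, d_1 d_2 \cdots d_k \, c_k \cdots c_2 c_1 \, d_k \cdots d_2 d_1,
\]
relabeled in ascending order. First I would check that its chord diagram is precisely $\C_{k \times k}$: the chords $c_1, \dots, c_k$ are pairwise nested, the chords $d_1, \dots, d_k$ are pairwise nested, and each $c_i$ crosses each $d_j$, because the first occurrence of every $d_j$ falls strictly between the two occurrences of every $c_i$ while its second occurrence falls outside that span. Since this chord diagram contains $\C_{k \times k}$ (as itself) and $2 \leq k \leq k$, Theorem~\ref{thm:mBYn} gives $\NIin{W_k} \geq k+1$.

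For the reverse inequality I would exhibit an explicit reduction of length $k+1$. Removing the $k$ letters $d_1, \dots, d_k$ one at a time by $k$ applications of reduction operation $2$ deletes the second and fourth blocks of $W_k$ and leaves the return word $c_1 \cdots c_k c_k \cdots c_1$; a single application of reduction operation $1$ then removes this maximal subword and yields $\epsilon$. Hence $\NIin{W_k} \leq k+1$, and combining the two bounds, $\NIin{W_k} = k+1$. As $k$ runs through $\{2, 3, 4, \dots\}$ this produces words of every nesting index $\geq 3$. The remaining values are handled directly: $\epsilon$ has nesting index $0$, the return word $11$ has nesting index $1$, and $123213$ has nesting index $2$, since it contains $\C_{1 \times 2}$ and so is not $1$-reducible by Theorem~\ref{lem:letter-to-remove} (giving $\NIin{123213} \geq 2$), while the reduction $(123213, 2323, \epsilon)$ shows $\NIin{123213} \leq 2$.

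The one genuinely substantive step is the upper bound together with verifying that $W_k$ realizes $\C_{k \times k}$ exactly; the matching lower bound is immediate from Theorem~\ref{thm:mBYn}, so no separate argument that ``$k+1$ cannot be beaten'' is required. The main point to get right is therefore the crossing pattern of $W_k$---that the $c$-family and $d$-family are each internally nested while crossing each other completely---so that the theorem applies; once that is in hand, the reduction that strips off all of the $d_i$ first is routine.
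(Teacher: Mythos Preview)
Your argument is correct and follows the same route as the paper: handle the smallest cases directly and, for $n\geq 3$, take a word whose chord diagram is $\C_{(n-1)\times(n-1)}$ so that Theorem~\ref{thm:mBYn} gives the lower bound. You are in fact more careful than the paper, which simply asserts $\NIin{w}=n$ without writing down the matching upper bound; your explicit word $W_k$ and the reduction that strips the $d_i$'s before a single application of operation~1 supply exactly that missing half.
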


\begin{proof}
	We have $\NIin{11} = 1$, $\NIin{123231} = 2$ and 
	for $n \geq 3$, by Theorem~\ref{thm:mBYn}, 
	we can take $w$ to be a word corresponding to the chord diagram $\C_{(n-1) \times (n-1)}$ in Figure~\ref{fig:mBYn} to get $\NIin{w} = n$.
\end{proof}

We now introduce some notions to rephrase the characterization of $1$-reducible double occurrence words in terms of its subwords.

\begin{dfn}
	If $w = a_1 a_2 \cdots a_n$ and $u = a_{i_1} a_{i_2} \cdots a_{i_k}$
	such that $i_1, i_2, \ldots, i_k \in \{1,2,\ldots, n\}$ and $i_1 \leq i_2 \leq \cdots \leq i_k$,
	then we say that $u$ is a \emph{sparse subword}  of $w$.
\end{dfn}

\begin{dfn}
	Let $w$ and $w'$ be double occurrence words.
	If there exists a sparse subword $u$ of $w$ such that $w' = u^{asc}$ then we say that $w'$ is \emph{inherent in $w$}.
\end{dfn}

%TODO: Make this corollary the second characterization of 1-reducible words.
\begin{crl}
	Let $w$ be a double occurrence word.
	Then $w$ is $1$-reducible if and only
	if neither $123213$, $123132$, nor $121323$ is inherent in $w$.
\end{crl}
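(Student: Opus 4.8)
The plan is to deduce this corollary directly from Theorem~\ref{lem:letter-to-remove} by translating the sub-chord-diagram condition into the language of sparse subwords. Theorem~\ref{lem:letter-to-remove} already states that $w$ is $1$-reducible if and only if its chord diagram $\C$ does not contain $\C_{1 \times 2}$ as a sub-chord diagram, so the whole task reduces to proving the single equivalence: $\C$ contains $\C_{1 \times 2}$ as a sub-chord diagram if and only if one of $123213$, $123132$, $121323$ is inherent in $w$. Once this is established, negating both sides and invoking the theorem finishes the argument.

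The key step I would establish is a dictionary between three-chord sub-chord diagrams of $\C$ and sparse subwords of $w$ that are double occurrence words of size $3$. A sub-chord diagram of $\C$ is determined by a choice of a subset of the chords of $\C$, and these chords are in bijection with the letters of $w$; thus choosing three chords amounts to choosing three distinct letters $a,b,c$ occurring in $w$. I would then form the sparse subword $u$ consisting of the six occurrences of $a$, $b$, and $c$ read off in the order in which they appear in $w$, so that $u$ is a double occurrence word of size $3$. Since deleting chords from a chord diagram does not disturb the cyclic order of the endpoints of the surviving chords, the chord diagram of $u$ is exactly the three-chord sub-chord diagram picked out by $a$, $b$, $c$, and passing to $u^{asc}$ only relabels the chords and hence leaves the chord diagram unchanged. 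Conversely, if $w'$ is inherent in $w$ with $w' = u^{asc}$ for a sparse subword $u$, and $w'$ has size $3$, then because each letter of $w$ occurs at most twice, $u$ must consist of both occurrences of exactly three distinct letters; hence $u$ falls under the dictionary, and its chord diagram, which is the chord diagram of $w'$, is a sub-chord diagram of $\C$.

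With this dictionary in hand the two implications are immediate, using the fact recorded in the caption of Figure~\ref{fig:123213} that $\C_{1 \times 2}$ is precisely the chord diagram associated with each of $123213$, $123132$, and $121323$. If $\C$ contains $\C_{1 \times 2}$, I pick three chords realizing it; the associated sparse subword $u$ then has $u^{asc}$ equal to one of $123213$, $123132$, $121323$, so one of these words is inherent in $w$. Conversely, if one of the three words is inherent in $w$, its chord diagram $\C_{1 \times 2}$ is a sub-chord diagram of $\C$ by the dictionary. Combining with Theorem~\ref{lem:letter-to-remove} yields the stated equivalence.

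The only delicate point, and the step I would write most carefully, is the dictionary itself: verifying that a sparse subword which happens to be a size-$3$ double occurrence word is forced to contain both occurrences of exactly three letters, and that reading the chosen letters in $w$-order produces precisely the induced sub-chord diagram (up to the harmless relabeling $u \mapsto u^{asc}$). Everything else is bookkeeping, and the identification of $\C_{1 \times 2}$ with its three associated ascending-order words can simply be cited from Figure~\ref{fig:123213}.
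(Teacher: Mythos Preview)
Your proposal is correct and follows essentially the same approach as the paper: reduce to Theorem~\ref{lem:letter-to-remove} by showing that $\C_{1\times 2}$ is a sub-chord diagram of $\C$ if and only if one of the three listed words is inherent in $w$, citing Figure~\ref{fig:123213} for the identification of $\C_{1\times 2}$ with those words. The paper's proof simply asserts this equivalence in one sentence, whereas you spell out the dictionary between three-chord sub-chord diagrams and size-$3$ sparse double occurrence subwords; your extra care is warranted but does not constitute a different method.
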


\begin{proof}
	Since the words $123213$, $123132$, and $121323$ correspond to chord diagram $\C_{1 \times 2}$ in Figure~\ref{fig:123213},
	it follows that one of the words is inherent in $w$ if and only if $\C_{1 \times 2}$ is a sub-chord diagram of the chord diagram for $w$.
	Then by Theorem~\ref{lem:letter-to-remove}, the result follows.
\end{proof}

\subsection{Nesting index and circle graphs}
%SAITO COMMENT: The result in Subsection 3.3 could be stated as a proposition.

	A \emph{circle graph} is a graph $G = (V, E)$ obtained from a chord diagram $\C$ in the following way.
	For each chord in $\C$, we designate a vertex $v \in V$ and for distinct $v_1, v_2 \in V$, 
	we have $\{v_1, v_2\} \in E$ if and only if chords $v_1$ and $v_2$ intersect in $\C$.
Some define a circle graph as the intersection graph of a chord diagram.
Figure~\ref{fig:circle-graph-1212} gives the circle graph representation of the double occurrence word 1212.

\begin{figure}[h]
	\centering
	\begin{subfigure}{0.30\textwidth}
		\centering
		\vspace*{5pt}
		\includegraphics[scale=0.41]{./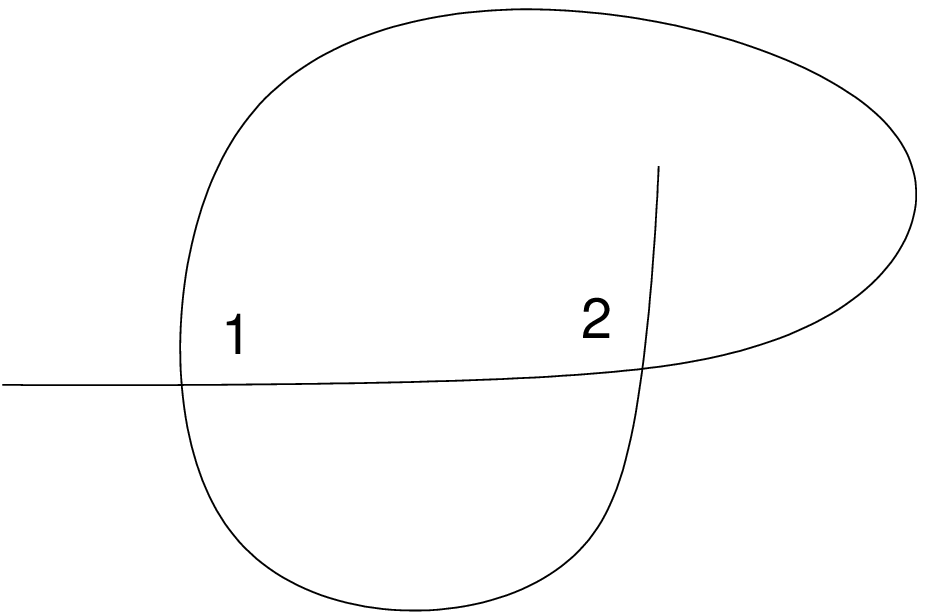}
		\caption{Assembly graph of 1212}
	\end{subfigure}
	\quad
	\begin{subfigure}{0.30\textwidth}
		\centering
		\hspace*{13pt}
		\includegraphics[scale=0.4]{./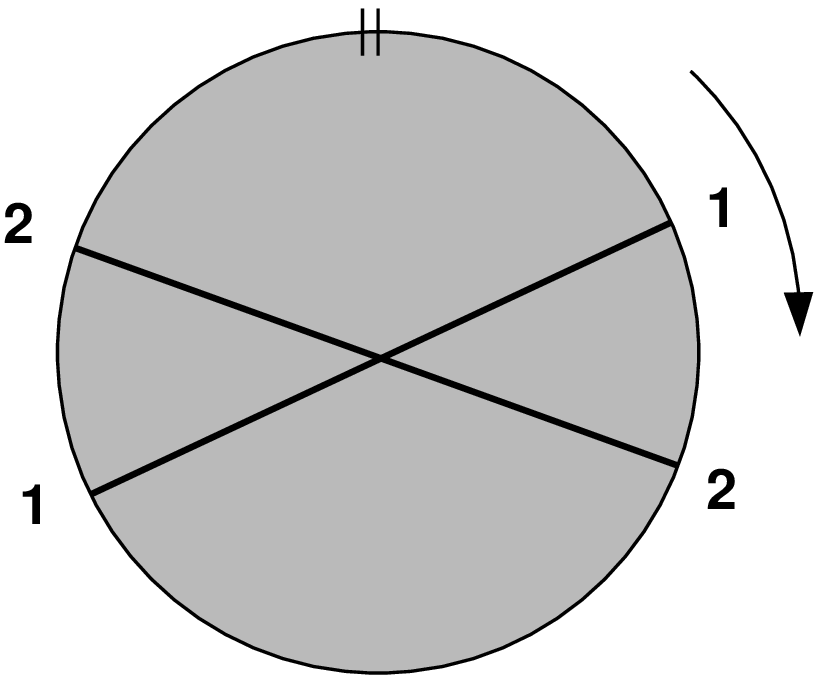}
		\caption{Chord diagram of 1212}
	\end{subfigure}
	\quad
	\begin{subfigure}{0.30\textwidth}
		\centering
		\vspace*{-10pt}
		\includegraphics[scale=0.85]{./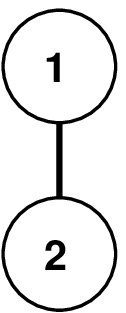}
		\caption{Circle graph of 1212}
		\label{fig:circle-graph-1212}
	\end{subfigure}
	\caption{Various representations of the double occurrence word 1212}
	%JONOSKA COMMENT: point out distinction between two figures in assembly graphs
\end{figure}

In the previous subsection 
we found some interesting relationships between the nesting index of a word and the chord diagram of that word.
This prompts the question 
whether any relationships can be found between the nesting index of a double occurrence word and its circle graph.
The following observations, although not a resounding ``no" to the question, 
do show that the nesting index is not an invariant of circle graphs.

Let us consider the words $w_1$ and $w_2$ of size $n \geq 1$ with the following form
\begin{align*}
	w_1 & = 1234 \cdots (2n-1)(2n)(2n-1)(2n) \cdots 3421, \\
	w_2 & = 12123434 \cdots (2n-1)(2n)(2n-1)(2n).
\end{align*}
One can easily verify that for arbitrary $n \geq 1$, we have $\NIin{w_1} = n$ and $\NIin{w_2} = 1$.
Also, Figure~\ref{fig:circle-graphs} shows that the two words correspond to the same circle graph.
Then for arbitrary $n \geq 1$, we can find words of size $n$ 
that correspond to the same circle graph and whose nesting indices differ by $n-1$.

\begin{figure}[h]
	\centering
	\begin{subfigure}{0.4\textwidth}
		\centering
		\vspace*{5pt}
		\includegraphics[scale=0.5]{./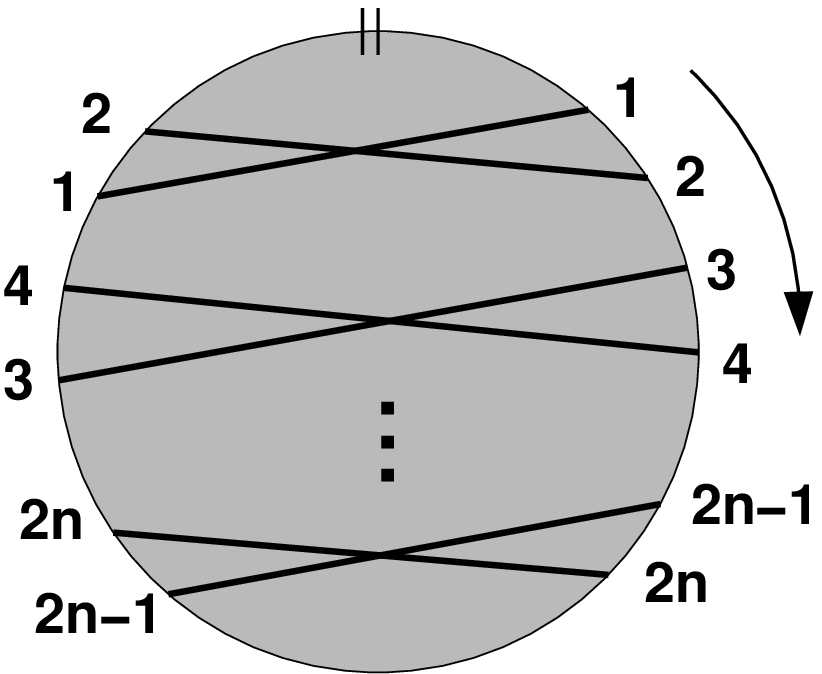}
		\vspace*{4pt}
		\caption{Chord diagram of $w_1$}
	\end{subfigure}
	\quad
	\begin{subfigure}{0.4\textwidth}
		\centering
		\includegraphics[scale=0.5]{./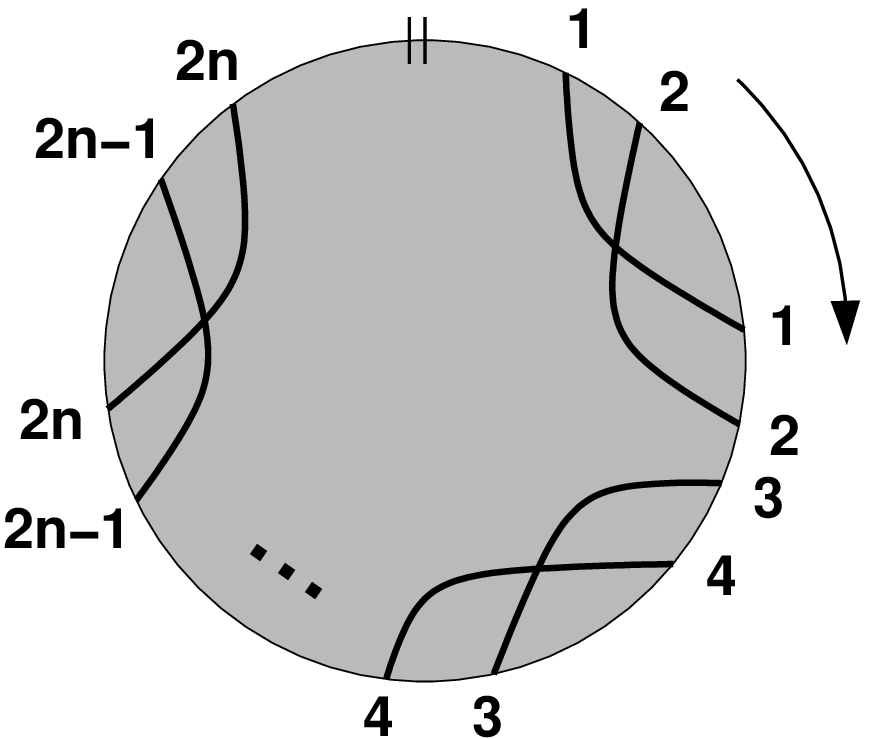}
		\caption{Chord diagram of $w_2$}
	\end{subfigure}
	\begin{subfigure}{0.5\textwidth}
		\centering
			\vspace*{20pt}
			\hspace*{-15pt}
		\includegraphics[scale=0.63]{./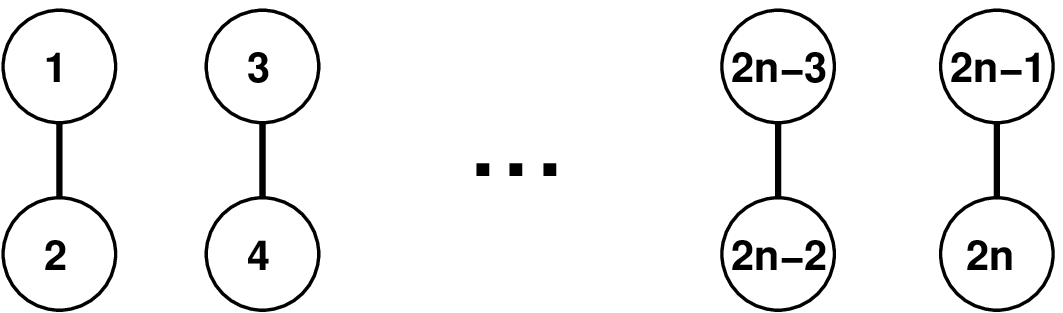}
		\caption{Circle graph of $w_1$ and $w_2$}
	\end{subfigure}
	\caption{Two words that correspond to the same circle graph
	with arbitrarily large differences in nesting indices}
	\label{fig:circle-graphs}
\end{figure}

%%%%%%%%%%%%%%%%%%%%%%%%%%CONJECTURES%%%%%%%%%%%%%%%%%%%%%%%%%%%
\subsection{Concluding remarks}
In the previous sections, we introduced the notion of a nesting index as an invariant of assembly graphs.
We gave a characterization of words that are $1$-reducible.
We showed that the nesting index is not an invariant of circle graphs.
Now we conclude the paper with some conjectures and open questions.

The counts in Table~\ref{tab:NI_counts} motivate the following conjecture.
 \emph{Let $n \geq  1$ be an integer and let $s$ be the number of non-zero squares less than $n$.
 Then the number of letters needed to construct a word with nesting index $n$ is $n-s$}.
 
Despite the comments made on the circle graphs in relation to the nesting index,
we still believe there are some interesting questions on the topic.
Given a circle graph and a maximal set of words that realize it,
when do those words have the same nesting index?
Let $S$ be a set of double occurrence words with the same circle graph and same nesting index.
Does there exist an integer $N$, independent of $S$, such that $|S| \leq N$?

%JONOSKA COMMENT: might be nice to point out NI for some very complex arrangements can be high

%%%%%%%%%%%%%%%%%%%%%%Acknowledgements%%%%%%%%%%%%%%%%%%%%%%%%%	
\section*{Acknowledgments}
This work has been supported in part by the NSF Grant DMS \#0900671.

The author thanks N. Jonoska, M. Saito and their research group
for their valuable comments and assistance in the work presented here.
For work on related topics the reader is advised to visit the research group's website at \linebreak
\url{http://knot.math.usf.edu}.

%%%%%%%%%%%%%%%%%%%%%%Bibliography%%%%%%%%%%%%%%%%%%%%%%%%%		

\end{document}